\newcommand{\old}[1]{{}}
\begin{document}


\newtheorem{thm}{Theorem}
\newtheorem{prop}[thm]{Proposition}
\newtheorem{conj}[thm]{Conjecture}
\newtheorem{lem}[thm]{Lemma}
\newtheorem{cor}[thm]{Corollary}
\newtheorem{ques}[thm]{Question}

\theoremstyle{definition}
\newtheorem{definition}[thm]{Definition}
\newtheorem{example}[thm]{Example}
\newtheorem{remark}[thm]{Remark}

\def\dd{{\operatorname{d}}}
\def\ds{\mathfrak{d}}
\def\vol{{\operatorname{vol}}}

\newtheorem*{thm*}{\normalfont\scshape Main Theorem}
\newtheorem*{conj*}{\normalfont\scshape Conjecture 1}

\begin{abstract}
Given a manifold $\mathcal{M} \subset \mathbb{R}^n$, we consider all codimension-1 submanifolds of $\mathcal{M}$ that satisfy the generalized Stokes' theorem and show that $\partial\mathcal{M}$ uniquely maximizes the associated entropy functional. This provides an information theoretic characterization of the duality expressed by Stokes' theorem, whereby a manifold's boundary is its `least informative' subset satisfying the Stokes relation.
\end{abstract}
\subjclass{58A05, 28D20, 94A17} 
\keywords{Stokes' theorem, entropy, differential forms, variational principles}

\title{Stokes' theorem as an entropy-extremizing duality}
\today
\author{Daniel Lazarev}
\address{Department of Mathematics, Massachusetts Institute of Technology, Cambridge, MA, USA}
\email{dlazarev@mit.edu}  

\maketitle

Entropy has been used to recast and strengthen many foundational results in geometry and analysis, including the isoperimetric inequality, the Brunn-Minkowski inequality, and Young's inequality \cite{demcovtho91, CovTho91}. Beyond reproving known results, an entropic perspective can uncover new structure. For example, the celebrated entropy monotonicity result of Artstein, Ball, Barthe and Naor \cite{ArtBall04} shows that entropy not only characterizes the Gaussian extremizer in the central limit theorem, but also increases monotonically along the  sequence of normalized sums of i.i.d. random variables, thus suggesting a deeper gradient flow structure toward Gaussianity \cite{JorKinOtt98,Vil09}.

In this note, we show that Stokes’ theorem admits a natural reinterpretation as an entropy-extremizing duality. Given a differential form $\omega$ on a manifold $\mathcal{M}$, we identify $\partial \mathcal{M}$ as the maximum entropy representative  among all codimension-1 submanifolds of $\mathcal{M}$ that encode the same integral information about $\dd \omega$. In this formulation, entropy maximization serves as a variational principle that selects the canonical boundary dual to an exact form. This perspective provides a conceptual bridge between geometric-analytic dualities and information-theoretic optimality, suggesting a general framework in which classical dualities---such as those arising from convex analysis, Hodge theory, or functional inequalities---may be understood through entropic extremality \cite{ArtMilSza04, ArtMil09}. This entropy-based approach may also be extended beyond Stokes' theorem, hinting at a more general entropic calculus over cohomological or variational structures.

The generalized Stokes' theorem is given by,
\begin{thm}[Stokes' Theorem]
\label{thm:stokes}
    Let $\mathcal{M}$ be an orientable manifold of dimension $n$ with boundary $\partial \mathcal{M}$, and let $\omega$ be an $(n-1)$-form, with exterior derivative $\dd \omega$. Then, 
    \begin{equation}
\label{eq:stokes}
        \int_{\mathcal{M}} \dd \omega = \int_{\partial \mathcal{M}} \omega \;.
    \end{equation}
\end{thm}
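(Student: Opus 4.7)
The plan is to reduce the global identity to a local computation via a partition of unity, and then verify the local computation using the fundamental theorem of calculus. Since $\mathcal{M}$ is orientable, I would first fix an oriented atlas $\{(U_\alpha,\varphi_\alpha)\}$ whose charts are modeled either on $\mathbb{R}^n$ (interior charts) or on the closed upper half-space $\mathbb{H}^n = \{x\in\mathbb{R}^n : x^n\geq 0\}$ (boundary charts), and choose a smooth partition of unity $\{\rho_\alpha\}$ subordinate to this cover. Writing $\omega = \sum_\alpha \rho_\alpha \omega$, the linearity of both $\dd$ and the integral reduces \eqref{eq:stokes} to proving the identity for each compactly supported piece $\omega_\alpha := \rho_\alpha \omega$ in a single chart.

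For a fixed chart, I would push forward to the model space and write $\omega_\alpha = \sum_{i=1}^n f_i \, \dd x^1 \wedge \cdots \wedge \widehat{\dd x^i} \wedge \cdots \wedge \dd x^n$ with $f_i$ compactly supported, so that $\dd \omega_\alpha = \bigl(\sum_i (-1)^{i-1}\partial_i f_i\bigr)\, \dd x^1\wedge\cdots\wedge \dd x^n$. For an interior chart, Fubini together with the fundamental theorem of calculus applied in each variable $x^i$ shows that each term $\int_{\mathbb{R}^n} \partial_i f_i$ vanishes because $f_i$ has compact support, so both sides of \eqref{eq:stokes} are zero (note the boundary integral is empty since the support does not meet $\partial\mathcal{M}$). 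For a boundary chart on $\mathbb{H}^n$, the same argument shows that the terms with $i<n$ vanish, while the $i=n$ term survives and, after integrating $\partial_n f_n$ from $0$ to $\infty$, contributes $-\int_{\mathbb{R}^{n-1}} f_n(x^1,\ldots,x^{n-1},0)\, \dd x^1\cdots \dd x^{n-1}$, which is precisely $\int_{\partial\mathbb{H}^n}\omega_\alpha$ with the induced boundary orientation.

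The main obstacle, and the step I would be most careful about, is verifying the orientation convention at the boundary: the induced orientation on $\partial\mathcal{M}$ is the one for which the outward-pointing normal completes a positively oriented frame, and on $\mathbb{H}^n$ the outward normal is $-\partial_n$, which introduces the sign that matches $(-1)^{n-1}\cdot(-1) = (-1)^n$ appearing in the $i=n$ term. Getting this bookkeeping right is what makes the local computation actually agree with the right-hand side of \eqref{eq:stokes}. Once this is done, summing over $\alpha$ and using $\sum_\alpha \dd(\rho_\alpha \omega) = \dd\omega$ (since $\sum_\alpha \dd\rho_\alpha = 0$) reassembles the global identity. A technical aside would be the case where $\mathcal{M}$ is noncompact: one either assumes $\omega$ has compact support, or works with a compact exhaustion, so that the partition of unity sum is locally finite and the integrals converge.
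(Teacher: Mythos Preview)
Your proof is correct and is the standard argument. Note, however, that the paper does not actually supply its own proof of Theorem~\ref{thm:stokes}: it simply states that ``the proof is constructive and follows, at its core, from the fundamental theorem of calculus and the definition of the derivative'' and cites \cite{GuiPol74} (Guillemin--Pollack). Your partition-of-unity reduction followed by the local Fubini/FTC computation on $\mathbb{H}^n$ is exactly the proof one finds in that reference, so you have filled in precisely what the paper outsources. There is nothing to compare beyond that.
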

The proof is constructive (in the sense of validating that $\partial \mathcal{M}$ satisfies \eqref{eq:stokes}) and follows, at its core, from the fundamental theorem of calculus and the definition of the derivative (see, for example, \cite{GuiPol74}). 
In general, however, there could be other subsets of $\mathcal{M}$ that satisfy (\ref{eq:stokes}).
We define the set of all submanifolds of codimension $1$ of $\mathcal{M}$ that satisfy the Stokes relation for a given $\omega$:
\begin{equation}
\label{eq:B}
    \mathcal{B}_{\omega, \mathcal{M}} := \left\{B \subset \mathcal{M}:  \int_{B} \omega  = \int_{\mathcal{M}} \dd \omega   \right\} .
\end{equation}
Every $B\in \mathcal{B}_{\omega, \mathcal{M}}$ thus serves as an ``alternative boundary" of $\mathcal{M}$, an $(n-1)$-dimensional subset of $\mathcal{M}$ over which $\omega$ is integrated, and whose integral must equal that of $\dd \omega$ over $\mathcal{M}$, per Stokes' theorem. 
Our main result, given in Theorem \ref{thm:entropy} below, shows that among all such $B\in \mathcal{B}_{\omega, \mathcal{M}}$ satisfying the Stokes relation, $\partial \mathcal{M}$ uniquely maximizes entropy. 
From an information theoretic perspective, this means $\partial \mathcal{M}$ serves as the only submanifold of $\mathcal{M}$ satisfying the constraint given by the Stokes relation but assuming no more information, or satisfying no additional constraints, other than that one.

We first define the measure corresponding to a form and its density.
\begin{definition}
    Given a Riemannian manifold and form, $(\mathcal{A}, \alpha)$, both of dimension $n$, we define the {\it measure} of $\alpha$ on $\mathcal{A}$ by
\begin{equation}
    \label{eq:mu}
    \mu_{\alpha, \mathcal{A}}(A) := \int_{\mathcal{A}} |\alpha|
,
\end{equation}
and we define the corresponding {\it density,} $\rho_\alpha$, as the Radon-Nikodym derivative with respect to Lebesgue measure,
\begin{equation}
    \label{eq:rho}
    \rho_{\alpha, \mathcal{A}} := \dfrac{d \mu_{\alpha,\mathcal{A}}}{d\vol} \,.
\end{equation}
\end{definition}
If the manifold over which the density is defined is clear, we will also write $\rho_{\alpha} :=\rho_{\alpha, \mathcal{A}}$ and $\mu_{\alpha} :=\mu_{\alpha, \mathcal{A}}$.

\begin{definition}
\label{def:S*-entropy}
The {\it $S^*$-entropy} of $(\mathcal{A}, \alpha)$ is defined by,
\begin{equation}
    \label{eq:S-entropy}
    S^*(\alpha, \mathcal{A}) := -\int_{\mathcal{A}} \rho_{\alpha} \log \left( \dfrac{\rho_{\alpha} }{C}\right) \; d\vol \;, 
\end{equation} 
where $C:= C_{\alpha,\mathcal{A}} = \max (1,  \lVert \rho_\alpha \rVert_\infty)$,
and $\lVert \rho_\alpha  \rVert_\infty$ is the supremum norm of $\rho$ over $\mathcal{A}$, $\lVert \rho_\alpha  \rVert_\infty := {\rm sup}\{ \rho_\alpha(x): x \in \mathcal{A}\}$. We take $0 \log0=0$.
\end{definition}

The `sup-normalization' in this definition allows for important properties of entropy seen in the discrete (Shannon) definition to be retained by ensuring that the argument of the logarithm does not exceed $1$ \cite{Laz23}, and reduces to the Shannon entropy if $\rho_\alpha$ is a discrete probability measure.

In defining the $S^*$-entropy we  do not normalize the density, which ensures that every point in the space is given the same weight when comparing the $S^*$-entropies of different subsets of the same ambient space, as we do in our main theorem. Nevertheless, our entropy satisfies similar properties, as will be shown below in Proposition \ref{lem:properties}.
Omitting normalization is also natural in other contexts---for example, for unbalanced optimal entropy-transport \cite{LieMieSav18} and in machine learning frameworks, where computing the normalization constants can be prohibitively expensive or formally impossible \cite{GoodBenCou16, JiaQin23}. 

\begin{prop}
\label{lem:properties}
    The $S^*$-entropy (\ref{eq:entropy}) satisfies the following properties:
    \begin{enumerate}
        \item (Nonnegativity) $S^*(\alpha, \mathcal{A}) \geq 0$.
        \item (Maximum entropy)  If $\mu_{\alpha}(\mathcal{A})\neq 0$ and $\mu_{\alpha}(\mathcal{A})< \vol(\mathcal{A})$, then 
        $$
        S^*(\alpha, \mathcal{A}) \leq \mu_{ \alpha}(  \mathcal{A})\log \left( \dfrac{C\, \vol ( \mathcal{A})}{\mu_{ \alpha}( \mathcal{A})}\right),
        $$
        where equality is achieved if and only if $\rho_{\alpha} = \mu_{ \alpha}(A)/\vol(\mathcal{A})$.
        \item (Set Monotonicity) For $\mathcal{A}_1 \subseteq \mathcal{A}_2, \;\;\; S^*(\alpha,\mathcal{A}_1) \leq S^*(\alpha, \mathcal{A}_2)$.
    \end{enumerate}
\end{prop}
\begin{proof}
(1) follows from the fact that $\rho_\alpha/C \in [0,1]$. For (2), we use Jensen's inequality and the concavity of the logarithm, which give, 
\begin{align*}
\dfrac{1}{\mu_\alpha(\mathcal{A})} S^*(\alpha, \mathcal{A}) +  \log \mu_\alpha (\mathcal{A}) = \dfrac{1}{\mu_\alpha(\mathcal{A})}\int_{\mathcal{A}} \rho_{\alpha} \log \left(\dfrac{\mu_\alpha(\mathcal{A})}{\rho_{\alpha} }\right) \; d\vol +\log C \leq \log \vol(\mathcal{A}) + \log C .
\end{align*}
Solving for $S^*(\alpha, \mathcal{A})$ gives the inequality.
Strict concavity implies a unique maximizer, the uniform distribution, $u = \rho(\mathcal{A})/\vol(\mathcal{A})$, which can be confirmed by computing its entropy.

For (3), we write $C_i := C_{\mathcal{A}_i}$ for $i=1,2$. Then, noting that $C_1 \leq C_2$, we have,
\begin{align*}
\label{eq:S-diff}
    S^*(\alpha, \mathcal{A}_2) - S^*(\alpha, \mathcal{A}_1) =& -\int_{\mathcal{A}_2} \rho_\alpha \log \left( \dfrac{\rho_\alpha }{C_2}\right) \; d\vol 
    + \int_{\mathcal{A}_1} \rho_\alpha \log \left( \dfrac{\rho_\alpha }{C_2} \cdot \dfrac{C_2}{C_1} \right) \; d\vol \\
    =& -\int_{\mathcal{A}_2\setminus \mathcal{A}_1} \rho_\alpha \log \left( \dfrac{\rho_\alpha }{C_2}\right) \; d\vol 
    + \mu_\alpha (\mathcal{A}_1) \log \left(\dfrac{C_2}{C_1} \right) \geq 0\, ,
\end{align*} 
where the first term is positive by (1), which completes the proof.
\end{proof}

\begin{definition}
Let $\mathcal{A}$ be an $n$-manifold embedded in $\mathbb{R}^{n+1}$.  Then $\operatorname{int}(\mathcal{A}) := \displaystyle \bigcup_i \operatorname{int_{JB}}(\mathcal{A}_i)$, where $\operatorname{int_{JB}}(\cdot)$ denotes the Jordan-Brouwer interior of a set \cite{GuiPol74} and $\{ \mathcal{A}_i \}$ is the set of  connected components of $\mathcal{A}$. 
\end{definition}

We can now define
\begin{definition}
\label{def:entropy}
Let $\mathcal{A}$ be an $n$-manifold embedded in $\mathbb{R}^{n+1}$ and $\alpha$ an $n$-form on $\mathbb{R}^{n+1}$. Then the {\it entropy} of $(\mathcal{A}, \alpha)$  is defined by,
    \begin{equation}
    \label{eq:entropy}
    S(\alpha, \mathcal{A}) = \begin{cases}
        S^*(\dd \alpha, {\rm int}(\mathcal{A})), \;\;\;\; {\rm if \;\,} \dd \alpha \neq 0 \\
        0, \;\,\;\;\;\;\;\;\; \;\;\;\;\;\;\;\;\; \;\;\;\;\;\;\; {\rm else}
    \end{cases} 
  .
    \end{equation}
\end{definition}
The entropy \eqref{eq:entropy} clearly satisfies all the properties in Proposition \ref{lem:properties}. Moreover, it satisfies the following

\begin{lem}[Stokes Compatibility]
\label{lem:stokes-compat}
    If $\alpha_1 \neq \alpha_2$ but $\dd \alpha_1 = \dd \alpha_2$ then $S(\alpha_1, \mathcal{A}) = S(\alpha_2, \mathcal{A})$, where  the forms and $\mathcal{A}$ are defined as in Definition \ref{def:entropy}.
\end{lem}
\begin{proof}
    The result follows immediately from the definition of entropy \eqref{eq:entropy}.
\end{proof}

\begin{remark}
The Stokes Compatibility property in Lemma \ref{lem:stokes-compat} affirms that our definition of entropy respects Stokes' theorem, which only recognizes  the information content of a form that is preserved by its exterior derivative, that is, modulo addition of a closed form.  
(Such constructions that depend only on the differential of the inputted function or form are also found, for example, in the case of the Dirichlet energy or the energy of harmonic maps \cite{Cor95}.)
Our entropy \ref{def:entropy} is therefore blind to topological information. Nevertheless, one can also account for topological data by, for example, defining the entropy,
$$
\mathcal{S}(\alpha, \mathcal{A}) := S^*(\dd \alpha, {\rm int}(\mathcal{A})) +S^*(\alpha_h, \mathcal{A}) \, ,
$$
where $\alpha_h$ is the harmonic component in the Hodge decomposition of $\alpha$. Though a study of such entropies is beyond the scope of this work, we do note that entropies of this sort, which sum the contributions due to the defining constraints or features of the given problem, can serve as powerful tools that can lead to novel results. A celebrated example is Perelman's $\mathcal{W}$-entropy, which sums the entropic contribution due to the Ricci flow for the Riemannian metric with that due to the `backward', or `conjugate', heat equation that the input function solves on the given manifold \cite{Per02, KleLot08}.


\end{remark}
\vspace{0.3cm}

Our main result shows that, among all Stokes-admissible submanifolds of codimension 1, $B \in \mathcal{B}_{\omega, \mathcal{M}}$, the given manifold's boundary maximizes the entropy.
\begin{thm}
\label{thm:entropy}
Let $\mathcal{M}\subseteq\mathbb{R}^n$ and let $\omega\in\Omega^{n-1}(\mathcal{M})$.  
For every Stokes–admissible set $B\in\mathcal{B}_{\omega, \mathcal{M}}$, we have
    \begin{equation}
    \label{eq:S-ineq}
         S(\omega,B) \,\leq \, S(\omega,\partial\mathcal{M})\, .
    \end{equation}
    Moreover, if ${\rm supp}(\rho_\omega) = \mathcal{M}$, then the maximum is  attained if and only if $B = \partial \mathcal{M}$ a.e.
\end{thm}
\begin{proof}
Without loss of generality, we assume $\mathcal{M}$ is such that $S(\omega,\partial\mathcal{M})<\infty$,  since otherwise the inequality is vacuously true. Similarly, if $\dd \omega = 0$, the result is likewise vacuously true with $S(\omega,B) = S(\omega,\partial\mathcal{M})=0$.
For $\dd \omega \neq 0$, by Proposition \ref{lem:properties}, since ${\rm int}(B) \subseteq \mathcal{M}$, $S(\omega, B) = S^*(\dd \omega,{\rm int}(B)) \leq S^*(\dd \omega, \mathcal{M})=S(\omega, \partial \mathcal{M})$.
Uniqueness follows from Proposition \ref{lem:properties} unless $\mu_{\dd \omega} (\mathcal{M} \setminus {\rm int}(B'))=0$ for some $B' \in \mathcal{B}_{\omega, \mathcal{M}}$, in which case $S(\omega, B') = S(\omega, \partial\mathcal{M})$. The condition in the theorem precludes this.
\end{proof}

\begin{remark}
\label{rem:stats-interpret}
    For a statistical interpretation of Theorem \ref{thm:entropy} one may consider $\mathcal{M}$ as the phase space, and $\dd \omega$ the weight function giving the relative (unnormalized) likelihood corresponding to every possible state. The candidate boundaries $B \in \mathcal{B}_{\omega, \mathcal{M}}$ are codimension-1 representations of $(\mathcal{M}, \dd \omega)$ obtained by integrating the information given by $\dd \omega$ over one of the dimensions. Theorem \ref{thm:entropy} is then the statement that any representation other than $\partial \mathcal{M}$ introduces new information, thereby reducing the volume of available phase space. 
    In physics, such an interpretation may also facilitate novel perspectives on, {\it e.g.}, black hole entropy \cite{Bek73,Bek74} and the holographic principle \cite{tHooft93, Bous02}, where the information in the interior volume of a space is viewed as being  encoded in its bounding surface. 
\end{remark}

We conclude with two examples.

\begin{example}[A piecewise constant function]
Given positive constants, $a,b,c,r$, let $X = [-2a,2a] \times [-b,b] \subset \mathbb{R}^2$, with 
$$
f(x,y) =  \begin{cases}
c, \,\,\,\,\,\,\,\,\,\,\,\,\,\,\,\, (x,y) \in [-a,a] \times [-b,b] \\
c/r, \,\,\,\,\,\,\,\,\,\,\,  (x,y) \in [a,2a] \times [-b,b]
\\
-c/r, \,\,\,\,\,\, (x,y) \in [-2a,-a] \times [-b,b]
\end{cases} .
$$
Let $Y = [-a,a] \times [-b,b]$ and $c/r\leq1$, so $C_X=1$. Then,
$$
S(f,X) = \vol(Y) c\log (1/c) + \vol(X\setminus Y)\, \left(\dfrac{c}{r} \right)  \log\left(\dfrac{r}{c}\right),
$$
so $S(f, X) \geq S(f, Y) = \vol(X\setminus Y)\, \left( \dfrac{c}{r} \right) \log\left(\dfrac{r}{c}\right)\geq0$.
One can check that this holds for all other  combinations of values of $c$ and $r$ ({\it e.g.}, $c>1$ and $r>1$, $c>1$ and $r<1$, {\it etc.}).

\end{example}

\begin{example}[Annulus and a Stokes–admissible candidate boundary]
\label{ex:annulus}
Let  $M=\{x\in\mathbb{R}^2 : r_i \leq |x| \leq r_o\}$
be an annulus, and let $\omega = f(r)\, d\theta$ with 
$f\in C^1([r_i,r_o])$.  Then
$d\omega = f'(r)\, dr\wedge d\theta$.
Using polar area $d\mathrm{vol}=r\,dr\,d\theta$, we have $\rho_{d\omega}(r)=\frac{|f'(r)|}{r}$.
The $S^*$-entropy of $d\omega$ on $M$ is therefore
$$
S^*(d\omega,M)
=
-\int_M 
\,\rho_{d\omega}\log\!\Bigl(\tfrac{\rho_{d\omega}}{C(M)}\Bigr)\,
d\mathrm{vol}
=
-2\pi\!\int_{r_i}^{r_o}
\frac{|f'(r)|}{r}\,
\log\!\Bigl(\tfrac{|f'(r)|/r}{C(M)}\Bigr)\, r\,dr,
$$
with 
$$
C(M)=
\max\left\{1,\operatorname*{sup}_{r_i\le r\le r_o}
\frac{|f'(r)|}{r}\right\}.
$$

Now let $B=\{ |x|=r_B\}$ with $r_i<r_B<r_o$ be a 
Stokes–admissible candidate boundary, so that
$\int_B \omega = \int_{\partial M}\omega = \int_M d\omega$.
Since 
\[
\int_B \omega = 2\pi f(r_B), 
\qquad 
\int_{\partial M}\omega 
= 2\pi\bigl(f(r_o)-f(r_i)\bigr),
\]
we obtain the constraint,
\[
f(r_B)=f(r_o)-f(r_i),
\]
which (for monotone $f$) determines a unique $r_B\in(r_i,r_o)$.

The interior associated to $B$ is the smaller annulus, ${\rm int}(B)=\{r_i\le r \le r_B\}$.
Without loss of generality we assume that $d\omega\neq0$, so the entropy is given by $S(\omega,B)=S^*(d\omega,\operatorname{int}(B))$.
Therefore,
$$
S(\omega,B)
=
-2\pi\!\int_{r_i}^{r_B}
\frac{|f'(r)|}{r}\,
\log\Bigl(\tfrac{|f'(r)|/r}{C(\operatorname{int}(B))}\Bigr)\, r\,dr,
$$
where
$$
C(\operatorname{int}(B))
=
\max\left\{1,\operatorname*{sup}_{r_i\le r\le r_B}
\frac{|f'(r)|}{r}\right\}.
$$

The integrand is the same in both expressions, $\operatorname{int}(B)\subset M$ reduces the radial interval of integration, and $C(M)\geq C(\operatorname{int}(B))$ increases the logarithmic factor, so we have
$S(\omega,B) \leq S(\omega,\partial M)$
for any function, $f(r)$. 
\end{example}
\vspace{0.5cm}

The entropy-extremizing perspective for Stokes' theorem presented here suggests a broader framework of {\it entropy-extremizing dualities}. In this note we have focused on codimension-$1$ submanifolds 
$B \subset \mathcal{M}$ satisfying the Stokes relation, and showed that the canonical boundary 
$\partial \mathcal{M}$ maximizes entropy.
Instead of the space of submanifolds satisfying the Stokes relation, we can similarly consider the space of operators that globally satisfy the Stokes relation: Given $\omega \in \Omega^{n-1}(\mathcal{M})$, one may define the alternative 
derivative operators $\mathfrak{d}:~\Omega^{n-1}(\mathcal{M}) \to \Omega^n(\mathcal{M})$ such that
$$
   \int_{\mathcal{M}} \mathfrak{d} \omega \;=\; \int_{\partial \mathcal{M}} \omega \;,
$$
that is, operators indistinguishable from the exterior derivative, $\dd$, when applied to $\omega$ and integrated over $\mathcal{M}$. 
Preliminary investigations indicate that among all such $\mathfrak{d}$, entropy is similarly uniquely maximized for $\mathfrak{d} = \dd$. 

These observations point toward a more comprehensive theory of entropy-extremizing dualities that could 
encompass classical dualities from convex analysis, Hodge theory, and functional inequalities, and that may, more importantly, provide a rigorous variational framework for the principled discovery of new results. A systematic development of this framework is the focus of future work.
\\
\\

\subsection*{Acknowledgments.} 
The author would like to thank Henry Cohn for his feedback and mentorship, and William Minicozzi and David Ebin for their comments and feedback on earlier drafts.
\\

 \bibliography{refs}
 \bibliographystyle{alpha}
 \end{document}